\newtheorem{thm}{Theorem}[section]
\newtheorem{lem}[thm]{Lemma}
\theoremstyle{remark}
\newtheorem{rem}[thm]{Remark}
\numberwithin{equation}{section}
 \def\lz{{\lambda}}
 \def\sz{{\sigma}}
 \def\vz{{\varepsilon}}
\def\Lz{\Lambda}
\def\a{{\mathbf a}}
 \def\b{{\mathbf b}}
\def\({\Bigl(}
\def \){ \Bigr)}
\def\x{{\bf x}}
\def\y{{\bf y}}
\begin{document}
\title[] {Average Nikolskii factors for random diffusion polynomials on closed Riemannian manifolds}

\author[]{Yun Ling} \address{School of Mathematical Sciences, Capital Normal
University, Beijing 10004,
China}
\email{lingyun0503@yeah.net}

\author[]{Heping Wang} \address{School of Mathematical Sciences, Capital Normal
University, Beijing 100048,
China}
\email{wanghp@cnu.edu.cn}

\keywords{Random diffusion polynomials; Average Nikolskii factors}
\subjclass[2020]{26D05, 42A05}
\begin{abstract}
For $1\le p,q\le \infty$, the Nikolskii factor for a diffusion
polynomial $P_{\a}$ of degree at most $n$ is defined by
$$N_{p,q}(P_{\a})=\frac{\|P_{\a}\|_{q}}{\|P_{\a}\|_{p}},\ \ P_{\a}(\x)=\sum_{k:\lambda_{k}\leq n}a_{k}\phi_{k}(\x),$$
where $\a=\{a_k\}_{\lz_k\le n}$,  and
$\{(\phi_k,-\lambda_k^2)\}_{k=0}^\infty$ are the eigenpairs of the
Laplace-Beltrami operator $ \Delta_{\mathbb M}$ on a closed smooth
Riemannian manifold $\mathbb M$ with normalized Riemannian measure.
 We study this average
Nikolskii factor for random diffusion polynomials with independent
$N(0,\sigma^{2})$ coefficients and obtain  the exact orders. For
$1\leq p<q<\infty$, the average Nikolskii factor is of order
$n^{0}$ (i.e., constant), as compared to the worst case bound of
order $n^{d(1/p-1/q)}$,  and for $1\leq p<q=\infty$,  the average
Nikolskii factor is of order $(\ln n)^{1/2}$ as compared to the
worst case bound of order $n^{d/p}$.
\end{abstract}
\maketitle
\input amssym.def

\section{Introduction}
Let $\Bbb M$ be a closed connected smooth $d$-dimensional
Riemannian manifold equipped with the normalized Riemannian
measure $\mu$. Here  a closed manifold is  a compact manifold
without boundary. For $1\le p<\infty$, let $L_p(\Bbb M)$ be the
set of all complex valued measurable function defined on $\Bbb M$
endowed with finite norm
$$\|f\|_{p}:=\left(\int_{\Bbb M}|f({\bf x})|^p{\rm d}\mu({\bf x})\right)^{1/p},$$
and let  $L_\infty(\Bbb M)\equiv C(\Bbb M)$ be the set of all complex valued continuous functions defined on $\Bbb M$ endowed with uniform norm
$$\|f\|_{\infty}:=\sup\limits_{{\bf x}\in\Bbb M}|f({\bf x})|.$$
 In particular, $L_2(\Bbb M)$ is a Hilbert space with inner product

\begin{equation}\label{1.1}
\langle
f,g\rangle:=\int_{\Bbb M}f({\bf x})\overline{g({\bf x})}{\rm d}\mu({\bf x}),\ {\rm for}\ f,g\in
L_2(\Bbb M),
\end{equation}
where $\overline{g}$ is the complex conjugate to $g$.

Let $\Delta_{\Bbb M}$ be the Laplace-Beltrami operator on $\Bbb M$, which is self-adjoint in $L_2(\Bbb M)$. This operator has a sequence of
eigenvalues $\{-\lambda_k^2\}_{k=0}^\infty$ arranged by
$$0=\lambda_0\le\lambda_1\le\dots\le\lambda_k\le\dots\to\infty,$$
and a complete orthonormal system of eigenfunctions
$\{\phi_k\}_{k=0}^{\infty}$. Without loss of generality, we choose
each $\phi_k$ to be real-valued and $\phi_0=1$. Then
$\{\phi_{k}\}^{\infty}_{k=0}$ is an orthonormal basis for
$L_{2}(\mathbb{M})$. For each $n=0,1,2,\dots$, with
$\mathcal{P}_n$ we denote the set of ``polynomials" of degree at
most $ n$ on $\Bbb M$ by
$$\mathcal{P}_n:={\rm span}\left\{\phi_k:\,\lambda_{k}\leq n\right\}.$$
This space is called the \emph{diffusion polynomial space of
degree $n$} on $\Bbb M$, and an element of $\mathcal{P}_n$ is
called a \emph{diffusion polynomial of degree at most $n$}. We
have  \emph{two-sides Wely's law} with exponent $d$ and constants
$c_{2}(d)>c_{1}(d)>0$ (see \cite{GL,HL,HO,SM}), i.e.,
\begin{equation}\label{1.2}
c_{1}(d)n^d\le\sum_{k:\lambda_{k}\leq n}|\phi_k({\bf x})|^2\le
c_{2}(d)n^d,\ \ {\bf x}\in \Bbb M,
\end{equation}
which, by integrating on $\Bbb M$ with respect to ${\bf x}$,
implies that the dimension $N$ of the space $\mathcal{P}_{n}$
satisfies
\begin{equation}\label{1.3}
N:=\dim \mathcal{P}_{n} := \#\{\phi_{k}:\lambda_{k}\leq n\}\asymp
n^d,
\end{equation}
where $\#E$ denotes the number of element in the set $E$. Here we
use the notation $A_n\asymp B_n$ to express $A_n\ll B_n$ and
$A_n\gg B_n$, and $A_n\ll B_n$ ($A_n\gg B_n$) means that there
exists a constant $c>0$ independent of $n$ such that $A_n\leq
cB_n$ ($A_n\geq cB_n$).

Let $P$ be a {diffusion polynomial of degree at most $n$}.
Then,  $P$ is of the form $P_{\a}$
\begin{equation}\label{1.4}
P(\x)=P_{\a}(\x)=\sum_{k:\lambda_{k}\leq n}a_k\phi_k(\x), \ \
\x\in\mathbb{M},
\end{equation}
where $\a=\{a_k\}_{\lambda_{k}\leq n}\in\Bbb R^N$.

The classical Nikolskii inequality for diffusion polynomials
(see \cite[Lemma 5.5]{M}) states that there exists a positive constant
$C(d)$ depending only on $d$ such that for any $1\leq
p,q\leq\infty$ and $P_{\a}\in\mathcal{P}_{n}$,
\begin{equation}\label{1.5}
\|P_{\a}\|_{q}\leq C(d)n^{d(1/p-1/q)_+}\|P_{\a}\|_p,
\end{equation}
where $a_+=\max(a,0)$ for a real number $a$.  The order $d(1/p-1/q)_+$ in the
above inequality \eqref{1.5} is sharp.

For given $1\le p,q\le \infty$,  we introduce the Nikolskii
factor for $P_\a$ by
$${N}_{p,q}(P_{\a}):=\frac{\|P_{\a}\|_{q}}{\|P_{\a}\|_{p}}.$$
The worst Nikolskii factor for $\mathcal{P}_n$ is defined by
$$N^{\rm wor}_{p,q}(\mathcal{P}_{n}):=\sup_{0\neq P_{\a}\in\mathcal{P}_{n}}{N}_{p,q}
(P_{\a})=\sup_{0\neq
P_{\a}\in\mathcal{P}_{n}}\frac{\|P_{\a}\|_{q}}{\|P_{\a}\|_{p}},$$
where the supremum is taken over all diffusion polynomials of
degree $\le n$ whose coefficients are not all zero. Then the above
Nikolskii inequality \eqref{1.5} states
\begin{equation}\label{1.6}
N^{\rm
wor}_{p,q}(\mathcal{P}_{n})\asymp n^{d(1/p-1/q)_{+}},
\end{equation}
i.e., that the worst case bound for
${N}_{p,q}(P_{\a})$ is $\Theta(n^{d(1/p-1/q)_{+}})$.

Now we introduce average Nikolskii factors on the   compact
manifold $\Bbb M$. Average Nikolskii factors  were first
introduced and studied in \cite{LGLW}.   Average Nikolskii factors
on the torus and the sphere were investigated in \cite{LGLW} and
\cite{LGLW1}, respectively.  A random diffusion polynomial
$P_{\a}\in\mathcal{P}_{n}$ is defined by
\begin{equation}\label{1.7}
P_{\a}(\x)=\sum\limits_{k:\lambda_{k}\leq n}a_{k}\phi_{k}(\x),
\end{equation}
where $\a=\{a_{k}\}_{\lambda_{k}\leq n}\in \mathbb{R}^{N}$, the
coefficients $a_k$ are independent $N(0,\sigma^2)$ random
variables with the common normal density functions
$$\frac1{\sqrt{2\pi}\sz}e^{-\frac{a_k^2}{2\sz^2}}.$$
That is,  $\a\sim
N(0, \sz^2I_N)$ is an $\mathbb{R}^{N}$-valued Gaussian random
vector with mean $0$ and covariance matrix $\sigma^{2}I_{N}$,
where $I_N$ is the $N$ by $N$ identity matrix. Let $\gamma_{N}$ be the distribution or the law of the random vector $\a$. Then $\gamma_{N}$ is just the centered Gaussian measure on $\mathbb{R}^{N}$,
given by
$$\gamma_{N}(G)=\frac{1}{(2\pi\sigma^{2})^{N/2}}
\int_{G}e^{-\frac{|\a|^{2}}{2\sigma^{2}}}\,d\a\ {\rm for\
each\ Borel\ subset}\  G\subset\mathbb{R}^{N},$$
where
$|\a|=(\sum_{k:\lambda_{k}\leq n}a^{2}_{k})^{1/2}$.

For given $1\le p,q\le \infty$, the average Nikolskii
factor for $\mathcal{P}_{n}$ is defined by
$$N^{{\rm ave}}_{p,q}(\mathcal{P}_{n}):=\Bbb
E\frac{\|P_{\a}\|_{q}}{\|P_{\a}\|_{p}}=\int_{\mathbb{R}^{N}}
\frac{\|P_{\a}\|_{q}}{\|P_{\a}\|_{p}}d\gamma_{N}(\a)=\frac{1}{(2\pi\sigma^{2})^{N/2}}
\int_{\Bbb R^N}
\frac{\|P_{\a}\|_{q}}{\|P_{\a}\|_{p}}e^{-\frac{|\a|^{2}}{2\sigma^{2}}}
d\a.$$

It follows immediately from the definitions of the two Nikolskii
factors that for $1\le p,q\le \infty$,
$$N^{{\rm ave}}_{p,q}(\mathcal{P}_{n})\le N^{{\rm
wor}}_{p,q}(\mathcal{P}_{n}).$$

This paper is devoted to investigating  expected values of the
Nikolskii factors of random diffusion polynomials. Given a
polynomial basis $p_1,\dots, p_N$, a random polynomial $p$ is a
polynomial whose coefficients are random, i.e.,
$$p=\sum_{k=1}^NX_kp_k,$$
where $X_k,\,1\le k\le N$ are independent and identically distributed (i.i.d.) random variables. In recent years the study of random polynomials
has attracted much interest, and there are numerous papers devoted
to this field, see \cite{BL, Bos,  BLR,  CE, CP, DM, DNN, DNV, F,
G, LWW, LGLW, LGLW1, NB, Pr, PR, SZ, SW, WYZ}.

There are also many papers devoted to studying the Nikolskii type
inequalities, see for example, \cite{AD, D, DT, GT, T}. However,
as far as we know, there are only two papers devoted to
investigating the average Nikolskii factors for random polynomials
(see \cite{LGLW,LGLW1}). Indeed, the authors in \cite{LGLW}
studied the the average Nikolskii factors on the $d$-dimensional
torus and obtained their exact orders, and later the authors in
\cite{LGLW1} studied the the average Nikolskii factors on the
$(d-1)$-dimensional sphere and obtained their exact orders. It
turns out that  in many cases, the average Nikolskii factors on
the $d$-dimensional torus and the $(d-1)$-dimensional sphere are
significantly smaller than the worst Nikolskii factors.

The purpose of this paper is to extend these results to the closed
Riemannian manifold. We  investigate the average Nikolskii factors
$N^{{\rm ave}}_{p,q}(\mathcal{P}_{n})$ on the closed manifold
$\mathbb{M}$, and show that in many cases, the average Nikolskii
factor $N^{\rm ave}_{p,q}(\mathcal{P}_{n})$  is significantly
smaller than the worst Nikolskii factor $N^{\rm
wor}_{p,q}(\mathcal{P}_{n})$.

\begin{thm}\label{thm1.1}
Let  $1\leq p,q\leq\infty$. Then we have
\begin{equation}\label{1.8}
N^{\rm ave}_{p,q}(\mathcal{P}_{n})=\mathbb{E}\frac{\|P_{\a}\|_{q}}{\|P_{\a}\|_{p}}\asymp\left\{
\begin{aligned}
 &1,&&1\leq p,q<\infty\ or\ p=q=\infty,\\
 &(\ln n)^{1/2},&&1\leq p<q=\infty,\\
 &(\ln n)^{-1/2},&&1\leq q<p=\infty.
\end{aligned}
\right.
\end{equation}
\end{thm}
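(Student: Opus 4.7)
My strategy reduces Theorem~\ref{thm1.1} to two separable inputs: a sharp estimate of the typical size of $\|P_\a\|_r$ for each $r\in[1,\infty]$, namely $\asymp \sz n^{d/2}$ when $r<\infty$ and $\asymp \sz n^{d/2}(\ln n)^{1/2}$ when $r=\infty$, together with Gaussian concentration of $\|P_\a\|_r$ about its mean. Once these are in hand, the expectation of the ratio splits into a good event (on which both norms lie in their typical windows and the ratio is $\asymp A_q/A_p$, where $A_r=1$ for $r<\infty$ and $A_\infty=(\ln n)^{1/2}$) and a bad event where the worst-case Nikolskii inequality \eqref{1.5} still caps the ratio by $n^{d(1/p-1/q)_+}$. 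The case $p=q=\infty$ is immediate.

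For the moment estimates I observe that $P_\a(\x)$ is, for each fixed $\x$, a centred Gaussian of variance $\sz^2 K_n(\x,\x)$ with $K_n(\x,\x):=\sum_{\lz_k\le n}\phi_k(\x)^2$. Fubini gives
\[
\mathbb{E}\|P_\a\|_p^p = m_p\sz^p\int_{\Bbb M} K_n(\x,\x)^{p/2}\,d\mu(\x),\qquad m_p:=\mathbb{E}|Z|^p,\ Z\sim N(0,1),
\]
and the two-sided Weyl law \eqref{1.2} instantly yields $(\mathbb{E}\|P_\a\|_p^p)^{1/p}\asymp \sz n^{d/2}$ for $1\le p<\infty$. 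The case $p=\infty$ is handled separately: a Dudley-type entropy bound over a polynomial-size net in the intrinsic Gaussian pseudo-metric on $\Bbb M$ supplies the upper bound $\mathbb{E}\|P_\a\|_\infty\ll \sz n^{d/2}(\ln n)^{1/2}$, and a Sudakov-type argument, which selects $\asymp n^d$ geometrically separated sampling points at which the pointwise Gaussians $P_\a(\x_j)$ are approximately independent thanks to off-diagonal kernel decay, supplies the matching lower bound.

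For concentration, I note that $\a\mapsto\|P_\a\|_r$ is Lipschitz with constant $L_r\ll n^{d(1/2-1/r)_+}$: orthonormality gives $\|P_\h\|_2=|\h|$, and the deterministic Nikolskii inequality \eqref{1.5} upgrades this bound to every $r$. The Gaussian concentration of measure inequality then yields
\[
\mathbb{P}\bigl(\,\bigl|\|P_\a\|_r-\mathbb{E}\|P_\a\|_r\bigr|>t\,\bigr)\le 2\exp\!\Bigl(-\tfrac{t^2}{2\sz^2 L_r^2}\Bigr),
\]
so with $t$ equal to half the typical scale the probability of deviation is at most $n^{-C}$ for a constant $C$ that is arbitrarily large for $r<\infty$ and positive for $r=\infty$; this is sufficient to kill the bad-event contribution in either direction. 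For $r<2$ I supplement the upper tail with a lower tail obtained from Paley--Zygmund applied to $X=\|P_\a\|_r^r$: the inequality $\mathbb{E} X^2\ll (\mathbb{E} X)^2$ follows from Cauchy--Schwarz inside the double integral $\iint \mathbb{E}[|P_\a(\x)|^r|P_\a(\y)|^r]\,d\mu(\x)d\mu(\y)$, combined with the Gaussian moment formula.

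The technical heart, which I expect to be the main obstacle, is the lower bound $\mathbb{E}\|P_\a\|_\infty\gg \sz n^{d/2}(\ln n)^{1/2}$ on a general closed Riemannian manifold: unlike in \cite{LGLW,LGLW1}, no explicit formulae for the eigenfunctions $\phi_k$ are available, so the $\asymp n^d$ approximately independent samples must be produced from abstract off-diagonal kernel estimates together with the intrinsic Riemannian geometry. Once this ingredient is in place, the rest of the argument --- combining moment estimates, Gaussian concentration, and the good/bad-event split --- is quantitative but routine, and delivers the three cases of \eqref{1.8} uniformly in $p$ and $q$.
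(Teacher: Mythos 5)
Your route is genuinely different from the paper's. The paper never uses Gaussian concentration or a good/bad-event split: it decouples the ratio through the $L_2$ norm via Lemma~\ref{lem2.2} (an identity-in-order inherited from \cite{LGLW}, exploiting that $\|P_{\a}\|_2=|\a|$), handles a denominator $\|P_{\a}\|_1$ by the H\"older trick $\|P_{\a}\|_2^2\le\|P_{\a}\|_1^{1/2}\|P_{\a}\|_3^{3/2}$, controls a denominator $\|P_{\a}\|_\infty$ through negative moments $\mathbb{E}\|P_{\a}\|_\infty^{-r}\ll n^{-rd/2}(\ln n)^{-r/2}$ proved with a Gaussian small-ball inequality about the median (Theorem~\ref{thm3.5}), and then obtains \emph{all} lower bounds for free from $1\le\big(\mathbb{E}\frac{\|P_{\a}\|_q}{\|P_{\a}\|_p}\big)\big(\mathbb{E}\frac{\|P_{\a}\|_p}{\|P_{\a}\|_q}\big)$. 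Your scheme (exact moments via Fubini and \eqref{1.2}, Borell--TIS concentration with Lipschitz constants $L_r\ll n^{d(1/2-1/r)_+}$ coming from \eqref{1.5}, worst-case cap $n^{d(1/p-1/q)_+}$ on the bad event, the trivial cap $\|P_{\a}\|_q/\|P_{\a}\|_\infty\le1$ when $p=\infty$) is a legitimate alternative and, as far as the bookkeeping goes, it closes: for $r<\infty$ the deviation probabilities are superpolynomially small and beat the polynomial cap, while for $r=\infty$ the bound $n^{-c}$ with a small fixed $c$ suffices because there the cap is $O(1)$ or only a constant-probability event is needed. What you give up is the elegance of the reciprocal trick (you need two-sided concentration where the paper needs only upper bounds of reciprocal ratios); what you gain is independence from the small-ball estimate of Theorem~\ref{thm3.5}.

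The genuine gap is the one you yourself flag: the two-sided estimate $\mathbb{E}\|P_{\a}\|_\infty\asymp n^{d/2}(\ln n)^{1/2}$, above all its lower bound, is asserted but not proved, and it cannot be extracted from the Weyl law \eqref{1.2} alone. This is exactly where the paper does its real work: Xu's pointwise asymptotics of the spectral kernel (Lemma~\ref{lem2.1}) together with the Bessel decay \eqref{2.3}--\eqref{2.4} show that for a $\delta/n$-separated family of $\asymp n^d$ points with $\delta$ large but fixed, the normalized correlations $e(\tilde{\boldsymbol\xi}_i,\tilde{\boldsymbol\xi}_j,n)/\sqrt{e(\tilde{\boldsymbol\xi}_i,\tilde{\boldsymbol\xi}_i,n)e(\tilde{\boldsymbol\xi}_j,\tilde{\boldsymbol\xi}_j,n)}$ are at most $1/2$, after which Pisier's two-sided bound (Lemma~\ref{lem3.2}, i.e.\ Sudakov minoration plus the entropy upper bound) yields the $\sqrt{\ln n}$ order; moreover the upper bound requires passing from $\sup_{\x\in\mathbb M}$ to a max over $\asymp n^d$ points, which the paper gets from the Marcinkiewicz--Zygmund inequality of Filbir--Mhaskar (Lemma~\ref{lem3.3}) --- your ``Dudley over a polynomial-size net'' step silently needs this (or a Bernstein-type gradient estimate) as well. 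Until you import or reprove these kernel and sampling estimates, the cases $q=\infty$ and $p=\infty$ of \eqref{1.8} remain unestablished in your argument; with them, your outline does deliver the theorem.
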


\begin{rem}
Comparing \eqref{1.6} and \eqref{1.8}, we see that the orders of
the worst and average  Nikolskii factors are the same for $1\leq
q\leq p<\infty$ or $p=q=\infty$, whereas   the order of average
Nikolskii factors are significantly smaller than the worst
Nikolskii factors for the other cases.  More precisely, for $1\leq
p<q<\infty$, the worst Nikolskii factor is of order
$n^{d(1/p-1/q)}$, whereas the average Nikolskii factor is of order
$n^{0}$, for $1\leq p<q=\infty$, the worst Nikolskii factor is of
order $n^{d/p}$, whereas the average Nikolskii factor is of order
$(\ln n)^{1/2}$, and for $1\le q<p=\infty$, the worst Nikolskii
factor is of order $n^{0}$, whereas the average Nikolskii factor
is of order $(\ln n)^{-1/2}$.

This indicates that for $1\le p<q\le \infty$ or $1\le q<p=\infty$,
the order of the average Nikolskii factor
 is significantly smaller than the worst case Nikolskii factor.
\end{rem}

The remaining paper is structured as follows. Section
\ref{section2} contains some preliminary materials about harmonics
analysis on closed manifolds and some auxiliary results. In
Section \ref{section3}, we determine the exact order of the
average Nikolskii factor in the case of $1\leq q\leq\infty,p=2$
and give the upper bound of
$\mathbb{E}(\|P_{\a}\|^{-r}_{\infty})$. Finally, we prove Theorem
\ref{thm1.1} in Section \ref{section4}.

\section{Preliminaries}\label{section2}
In this section, we will provide  necessary materials required for
the remainder of the paper. We will begin by providing an overview
of fundamental concepts pertaining to closed Riemannian manifolds.
Subsequently, we will introduce the concepts of reproducing kernel
and Christoffel function. We will also present necessary knowledge
and establish key lemmas.

Let $\Bbb M$ be a closed connected smooth $d$-dimensional
Riemannian manifold equipped with the normalized Riemannian
measure $\mu$. Typical examples are closed  homogeneous manifolds,
which is of the form $G/H$. Here $G$ is a compact Lie group and
$H$ is a closed subgroup of $G$. For example, the $d$-torus $\Bbb
T^d$, $d=1,2,3,\dots$; the spheres $\Bbb S^d$, $d=1,2,3,\dots$; the
real projective spaces ${\rm P}^d(\Bbb R),\ d=2,3,4,\dots$; the
complex projective spaces ${\rm P}^{d}(\Bbb C),\ d=4,6,8,\dots$;
the quaternion projective spaces ${\rm P}^{2d}(\Bbb H),\
d=4,6,8,\dots$; the Cayley elliptic plane ${\rm P}^{16}(Cay)$. See
\cite{C,Ga,H,H1,H2} for details.

Let ${\rm d}({\bf x},{\bf y})$ be the geodesic distance of points
${\bf x}$ and ${\bf y}$ on $\Bbb M$, and let ${\rm B}({\bf x},r)$
denote the ball of radius $r>0$ centred at ${\bf x}\in\Bbb M$.
Then   $(\Bbb M,\mu)$ is  \emph{Ahlfors $d$-regular} which means
there exists a constant $C_{d}>0$ depending only on the dimension
$d$ such that
\begin{equation}\label{2.1}
C_{d}^{-1}\,r^d\le \mu\left({\rm B}({\bf x},r)\right)\le
C_{d}\,r^d,\ {\rm for\ all}\ {\bf x}\in\Bbb M\ {\rm and}\
0<r\le{\rm diam}(\Bbb M),
\end{equation}
where $\mu(E)=\int_E{\rm d}\mu({\bf x})$ for a measurable subset
$E$ of $\Bbb M$, and ${\rm diam}(\Bbb M):=\sup\limits_{
\x,\y\in\mathbb{M}}{\rm d}(\x,\y)$.

Let $\{(\phi_k,-\lambda_k^2)\}_{k=0}^\infty$ be the eigenpairs of
the Laplace-Beltrami operator $ \Delta_{\Bbb M}$   on $\Bbb M$.
Then $\{\phi_k\}_{k=0}^{\infty}$ is  an orthonormal basis  for
$L_{2}(\mathbb{M})$, and
 $\{\phi_k\}_{\lambda_{k}\leq n}$ is  an orthonormal basis for  the
space $\mathcal{P}_{n}$ of diffusion polynomials of degree at most
$n$ on $\mathbb{M}$. We define the Schwartz kernel of
$\mathcal{P}_{n}$  by
$$e(\x,\y,n)=\sum_{k:\lambda_{k}\leq n}\phi_{k}(\x)\phi_{k}(\y),\ \
\x,\y\in \Bbb M.$$
In particular, when  $\x=\y$,  by \eqref{1.2} we obtain
$$e(\x,\x,n)=\sum_{k:\lambda_{k}\leq n}|\phi_k({\bf x})|^2\asymp n^d,\ \ \x\in \Bbb M.$$
By the Cauchy-Schwartz inequality we have
\begin{equation}\label{2.2}
|e(\x,\y,n)|\leq \sqrt{e(\x,\x,n)}\sqrt{e(\y,\y,n)}\ll n^d.
\end{equation}

Let  $$P_{\a}(\x)=\sum\limits_{k:\lambda_{k}\leq
n}a_{k}\phi_{k}(\x)$$
be the random diffusion polynomial on $\Bbb M$, where $\a=\{a_{k}\}_{\lambda_{k}\leq n}\sim
N(0, \sz^2I_N)$ is a $\mathbb{R}^{N}$-valued Gaussian random
vector with mean $0$ and covariance matrix $\sigma^{2}I_{N}$. Then
the random diffusion polynomial $P_{\a}$
is centered, and its distribution is determined by its covariance
function $K(\x,\y)$, which is precisely $\sigma^2e(\x,\y,n)$.
Indeed, we have
\begin{align*}
K(\x,\y)&:=\Bbb E(P_{\a}(\x)P_{\a}(\y))=\sum\limits_{k:\lambda_{k}\leq
n}\sum\limits_{j:\lambda_{j}\leq n}\Bbb E(a_{k} a_{j})
\phi_{k}(\x) \phi_{j}(\y)\\
&=\sum\limits_{k:\lambda_{k}\leq
n}\sigma^2 \phi_{k}(\x)
\phi_{k}(\y)=\sigma^2e(\x,\y,n).
\end{align*}

We define the Bessel function $J_{v}$ of order $v$ by its Poisson
representation formula
$$J_{v}(t)=\frac{(\frac{t}{2})^{v}}{\Gamma(v+\frac{1}{2})\Gamma(\frac{1}{2})}\int^{1}_{-1}\cos(ts)(1-s^{2})^{v-1/2}ds,$$
where $v>-1/2$ and $t\geq 0$. We have the following well-known
asymptotic formula (see \cite[Appendix B]{GR})
\begin{equation}\label{2.3}
J_{v}(t)=\sqrt{\frac{2}{\pi t}}\cos\Big(t-\frac{\pi v}{2}-\frac{\pi}{4}\Big)+O(t^{-3/2}),\quad t\rightarrow+\infty.
\end{equation}
Consider the radial function $F_{d}({\bf z})$ on $\mathbb{R}^{d}$
defined by
$$F_{d}({\bf z}):=\Phi_{d}(|{\bf z}|):=(2\pi)^{-d}\int_{\mathbb{B}^{d}}e^{i\bf z\cdot\y}d\y,$$
where $\mathbb{B}^{d}:=\{\x\in\mathbb{R}^{d}:|\x|\leq1\}$ and ${\bf z}\cdot\y=\sum^{d}_{j=1}z_{j}y_{j}$ is the inner
product in $\mathbb{R}^{d}$.
 $\Phi_{d}(t)$, $t>0$, can be rewritten as
\begin{equation}\label{2.4}
\Phi_{d}(t)=(2\pi)^{-d}\omega_{d-1}\int^{1}_{-1}\cos(ts)(1-s^{2})^{(d-1)/2}ds=\frac{J_{d/2}(t)}{(2\pi)^{d/2}t^{d/2}},
\end{equation}
where $\omega_{d-1}=\frac{2\pi^{d/2}}{\Gamma(d/2)}$ is the surface
area of  the unit sphere $\mathbb{S}^{d-1}$ in $\mathbb{R}^{d}$.
Furthermore, we have the following more precise estimate for the
Schwartz kernel $e(\x,\y,n)$.
\begin{lem}\label{lem2.1} {\rm (\cite[Theorem 3.3]{X})}
As $n\rightarrow\infty$, there holds the asymptotic formula
\begin{equation}\label{2.5}
e(\x,\y,n)=\left\{
\begin{aligned}
 &\Phi_{d}(n{\rm d}(\x,\y))n^{d}+O(n^{d-1}),&&\Phi_{d}(nd(\x,\y))\neq0,\\
 &O(n^{d-1}),&& {\rm otherwise}.
\end{aligned}
\right.
\end{equation}
\end{lem}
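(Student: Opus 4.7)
The plan is to derive the asymptotic \eqref{2.5} from the spectral theory of the wave equation on $\Bbb M$, following H\"ormander's classical approach to the local Weyl law. The key object is the half-wave propagator $e^{it\sqrt{-\Delta_{\Bbb M}}}$, whose Schwartz kernel
\begin{equation*}
U(t,\x,\y)=\sum_{k=0}^\infty e^{it\lambda_k}\phi_k(\x)\phi_k(\y)
\end{equation*}
encodes all spectral information. Writing $e(\x,\y,n)$ as the inverse Fourier transform of the spectral measure against the indicator of $[0,n]$, and smoothing this sharp cutoff by a compactly supported bump function in the frequency variable, allows us, via finite propagation speed of the wave equation, to localize the analysis to small $|t|$, where the geometry of $\Bbb M$ is captured in a single normal coordinate chart around $\x$.

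In those normal coordinates, the Hadamard parametrix represents $U(t,\x,\y)$, modulo a smoothing error, as an oscillatory integral whose principal part coincides with the Euclidean half-wave kernel, multiplied by a geometric amplitude equal to $1$ on the diagonal. Integrating in $t$ against the chosen cutoff and applying stationary phase yields, after rescaling, the leading contribution
\begin{equation*}
(2\pi)^{-d}\int_{|\xi|\le n}e^{i\langle\xi,\exp_{\x}^{-1}(\y)\rangle}\,d\xi+O(n^{d-1}),
\end{equation*}
and by the definition of $F_d$ and $\Phi_d$ in \eqref{2.4} this equals $\Phi_d(n\,{\rm d}(\x,\y))\,n^d+O(n^{d-1})$ whenever $\Phi_d(n\,{\rm d}(\x,\y))\ne 0$. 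At a point where $\Phi_d$ vanishes, the leading term cancels and only the remainder survives, giving the second branch of \eqref{2.5}.

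To pass from the smoothed spectral function back to the sharp counting kernel $e(\x,\y,n)$, I would invoke a Fourier Tauberian theorem of H\"ormander type. The improvement of the remainder to the claimed $O(n^{d-1})$, rather than the generic Tauberian $O(n^{d-1/2})$, relies on the fact that contributions from $|t|$ bounded away from $0$ oscillate rapidly in $n$ and produce only lower-order terms, uniformly in $\x$ and $\y$. Compactness of $\Bbb M$ and the smoothness of $U(t,\cdot,\cdot)$ off the wavefront set (for $t$ away from the geodesic return times) are what ultimately absorb the global tail into $O(n^{d-1})$.

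The main obstacle I anticipate is the uniformity of the error across all configurations of $\x$ and $\y$: one must control the parametrix construction as ${\rm d}(\x,\y)$ approaches the injectivity radius and the cut locus, and one must verify that the geometric amplitude in the Hadamard parametrix contributes only at subleading order so that the \emph{universal} Euclidean radial kernel $\Phi_d$ emerges as the main term on every closed $d$-dimensional Riemannian manifold. Once these uniformities are settled, the bound $|e(\x,\y,n)|\ll n^d$ from \eqref{2.2} combined with the local asymptotic completes the proof.
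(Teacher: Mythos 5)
The paper itself contains no proof of Lemma~2.1: the statement is quoted verbatim from \cite[Theorem 3.3]{X}, so the only ``proof'' inside the paper is that citation. Your sketch reconstructs what is essentially the argument of the cited source: a H\"{o}rmander-style analysis of the spectral function via the half-wave propagator, the Hadamard parametrix at small times, stationary phase, and a Fourier Tauberian theorem, with the Euclidean radial kernel $\Phi_{d}$ emerging because the parametrix amplitude equals $1$ on the diagonal. As a plan this is the correct and standard route, so there is no fatal gap; but three of the points you defer deserve correction or emphasis. First, your remark that a ``generic Tauberian'' argument would only give $O(n^{d-1/2})$ is inaccurate: the H\"{o}rmander--Levitan Tauberian theorem yields the remainder $O(n^{d-1})$ as soon as the smoothed spectral density (convolution with a Schwartz function $\rho$ whose Fourier transform is compactly supported near $t=0$) is shown to be $O(n^{d-1})$, which is exactly what the small-time parametrix provides; since $\hat\rho$ vanishes for $|t|$ bounded away from $0$, no contribution from large times ever enters and no separate improvement step is needed. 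Second, the Tauberian theorem applies to nondecreasing functions, while $n\mapsto e(\x,\y,n)$ is not monotone off the diagonal; the standard remedy, which your sketch omits, is polarization, i.e.\ writing $4\phi_{k}(\x)\phi_{k}(\y)=\bigl(\phi_{k}(\x)+\phi_{k}(\y)\bigr)^{2}-\bigl(\phi_{k}(\x)-\phi_{k}(\y)\bigr)^{2}$ and running the diagonal-type argument on each monotone piece. Third, the cut-locus uniformity you single out as the main obstacle is largely a non-issue: once $\hat\rho$ is supported in $|t|\le\varepsilon$ with $\varepsilon$ below the injectivity radius, finite propagation speed makes the smoothed kernel negligible unless ${\rm d}(\x,\y)\le\varepsilon$, a region where the Hadamard parametrix is uniformly valid; and when ${\rm d}(\x,\y)$ is bounded below, the asserted main term is $O(n^{(d-1)/2})=O(n^{d-1})$, so \eqref{2.5} there reduces to the off-diagonal bound that the Tauberian step already supplies.
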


For given $1\le p,q\le
\infty$, the average Nikolskii factor for $\mathcal{P}_n$
is defined by
\begin{align*}
N^{{\rm ave}}_{p,q}(\mathcal{P}_n):=\Bbb
E\frac{\|P_{\a}\|_{q}}{\|P_{\a}\|_{p}}=\frac{1}{(2\pi\sigma^{2})^{N/2}}
\int_{\Bbb R^N}
\frac{\|P_{\a}\|_{q}}{\|P_{\a}\|_{p}}e^{-\frac{|\a|^{2}}{2\sigma^{2}}}
d\a.
\end{align*}
It follows from \cite[Proposition 2.1]{LGLW} that $$N^{{\rm
ave}}_{p,q}(\mathcal{P}_n)=\Bbb
E\frac{\|P_{\b}\|_{q}}{\|P_{\b}\|_{p}},$$
where
$\b=\{b_{k}\}_{\lambda_{k}\leq n}\sim N(0, I_N)$. Hence, in the
sequel  we always assume that $\sz=1$ and $\a\sim N(0,
 I_N)$ without loss of generality.

The following lemmas will be used in the following sections.
\begin{lem} {\rm (\cite[Theorem 2.2]{LGLW})} \label{lem2.2}
Let $1\leq p,q\leq\infty$ and $k,l>0$. Then for $l<k+N$,
\begin{equation}\label{2.6}
\mathbb{E}\frac{\|P_{\a}\|^{k}_{q}}{\|P_{\a}\|^{l}_{2}}\asymp
n^{-dl/2}\mathbb{E}\|P_{\a}\|^{k}_{q},
\end{equation}
and for $l<N$,
\begin{equation}\label{2.7}
\mathbb{E}\frac{\|P_{\a}\|^{k}_{2}}{\|P_{\a}\|^{l}_{p}}\asymp
n^{dk/2}\mathbb{E}\|P_{\a}\|^{-l}_{p}.
\end{equation}
In particular, for $s>0$,
\begin{equation}\label{2.8}
\mathbb{E}\frac{\|P_{\a}\|^s_{q}}{\|P_{\a}\|^s_{2}}\asymp
n^{-ds/2}\mathbb{E}\|P_{\a}\|_{q}^s.
\end{equation}
\end{lem}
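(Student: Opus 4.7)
The plan is to exploit the fact that $\{\phi_k\}_{\lambda_k\le n}$ is an \emph{orthonormal} basis of $\mathcal{P}_n$, so Parseval gives $\|P_{\a}\|_2 = |\a|$. This lets us decouple the radial and angular parts of the standard Gaussian measure on $\mathbb{R}^N$.

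Writing $\a = r\boldsymbol{\theta}$ with $r = |\a|$ and $\boldsymbol{\theta} \in \mathbb{S}^{N-1}$, one has $P_{\a} = r P_{\boldsymbol{\theta}}$, hence $\|P_{\a}\|_q = r\|P_{\boldsymbol{\theta}}\|_q$ for every $q$. Under the isotropic Gaussian law on $\mathbb{R}^N$, the radius $r$ and the direction $\boldsymbol{\theta}$ are independent, with $\boldsymbol{\theta}$ uniform on $\mathbb{S}^{N-1}$ and $r^2 \sim \chi^2_N$. Consequently
\begin{equation*}
\mathbb{E}\frac{\|P_{\a}\|_q^k}{\|P_{\a}\|_2^l} = \mathbb{E}[r^{k-l}]\,\mathbb{E}\|P_{\boldsymbol{\theta}}\|_q^k, \qquad \mathbb{E}\|P_{\a}\|_q^k = \mathbb{E}[r^k]\,\mathbb{E}\|P_{\boldsymbol{\theta}}\|_q^k,
\end{equation*}
so \eqref{2.6} reduces to proving $\mathbb{E}[r^{k-l}]/\mathbb{E}[r^k] \asymp n^{-dl/2}$. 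By the same token, since $\|P_{\a}\|_p^{-l} = r^{-l}\|P_{\boldsymbol{\theta}}\|_p^{-l}$,
\begin{equation*}
\mathbb{E}\frac{\|P_{\a}\|_2^k}{\|P_{\a}\|_p^l} = \mathbb{E}[r^{k-l}]\,\mathbb{E}\|P_{\boldsymbol{\theta}}\|_p^{-l}, \qquad \mathbb{E}\|P_{\a}\|_p^{-l} = \mathbb{E}[r^{-l}]\,\mathbb{E}\|P_{\boldsymbol{\theta}}\|_p^{-l},
\end{equation*}
and \eqref{2.7} reduces to $\mathbb{E}[r^{k-l}]/\mathbb{E}[r^{-l}] \asymp n^{dk/2}$. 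Equation \eqref{2.8} is then the case $k=l=s$ of \eqref{2.6}.

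The remaining step is the explicit computation of $\chi$-moments: for any real $m > -N$,
\begin{equation*}
\mathbb{E}[r^m] = \frac{2^{m/2}\,\Gamma((N+m)/2)}{\Gamma(N/2)}.
\end{equation*}
The integrability conditions therefore translate into $l < k+N$ in \eqref{2.6} (so that $\mathbb{E}[r^{k-l}]<\infty$) and $l < N$ in \eqref{2.7} (so that $\mathbb{E}[r^{-l}]<\infty$), matching the hypotheses. Ratios of two such moments are ratios of $\Gamma$-values whose arguments differ by $l/2$ (resp.\ by $k/2$), so Stirling's asymptotic $\Gamma(x+a)/\Gamma(x)\sim x^a$ as $x\to\infty$ gives
\begin{equation*}
\frac{\mathbb{E}[r^{k-l}]}{\mathbb{E}[r^k]} \asymp N^{-l/2}, \qquad \frac{\mathbb{E}[r^{k-l}]}{\mathbb{E}[r^{-l}]} \asymp N^{k/2},
\end{equation*}
and combining with $N \asymp n^d$ from \eqref{1.3} yields the desired factors $n^{-dl/2}$ and $n^{dk/2}$.

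The only delicate point is bookkeeping when $N+k-l$ or $N-l$ is not large — but since $N\asymp n^d \to \infty$ and $k,l$ are fixed positive reals, the Stirling asymptotic applies uniformly once $n$ is large enough, with the finitely many small-$n$ values absorbed into the $\asymp$ constants. I expect this Stirling comparison together with verifying that the equivalence is two-sided (i.e.\ controlling both $\ll$ and $\gg$ constants uniformly in $n$) to be the only non-trivial piece of the argument; the rest is bookkeeping from the radial-angular decomposition.
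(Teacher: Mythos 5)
Your argument is correct, and it is the natural (essentially the expected) proof: the paper itself does not prove Lemma~\ref{lem2.2} but imports it from \cite[Theorem 2.2]{LGLW}, and the polar decomposition $\a=r\boldsymbol{\theta}$ with $\|P_{\a}\|_2=|\a|$ by Parseval, independence of $r$ and $\boldsymbol{\theta}$ under the isotropic Gaussian, and the chi-moment ratio $\Gamma((N+m')/2)/\Gamma((N+m)/2)\asymp N^{(m'-m)/2}$ combined with $N\asymp n^d$ is exactly the mechanism behind such statements. The only point you should make explicit is that the angular factors cancel legitimately: $\mathbb{E}\|P_{\boldsymbol{\theta}}\|_{q}^{k}$ and $\mathbb{E}\|P_{\boldsymbol{\theta}}\|_{p}^{-l}$ must be finite and strictly positive for each fixed $n$; the former is clear since $\|P_{\boldsymbol{\theta}}\|_{q}\le\|P_{\boldsymbol{\theta}}\|_{\infty}\ll n^{d/2}$ on the unit sphere of coefficients, and the latter follows from the Nikolskii inequality \eqref{1.5}, which gives the uniform lower bound $\|P_{\boldsymbol{\theta}}\|_{p}\gg n^{-d(1/p-1/2)_{+}}\|P_{\boldsymbol{\theta}}\|_{2}=n^{-d(1/p-1/2)_{+}}$ for $|\boldsymbol{\theta}|=1$. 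With that one-line addition your proof is complete and matches the hypotheses $l<k+N$ and $l<N$ precisely through the integrability of $r^{k-l}$ and $r^{-l}$.
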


Next, we estimate $\mathbb{E}\|P_{\a}\|_{q}^s$, $1\leq s,
q<\infty$. This quantity is intimately related to the Christoffel
function for $\mathcal{P}_n$. We recall that the Christoffel function
for $\mathcal{P}_n$ is defined by
$$\Lz(\x):=\min_{p(\x)=1,\ p\in\mathcal{P}_n}\|p\|_2^2,\quad\x\in\mathbb{M},$$
where the minimum is taken over all $p\in\mathcal{P}_n$ that take
the value $1$ at $\x$. It follows from \cite[Theorem 3.5.6]{DX1}
and \eqref{1.2} that
\begin{equation}\label{2.9}
\Lz(\x)=\Big(\sum_{k:\lambda_{k}\leq n}|\phi_k(\x)|^{2}\Big)^{-1}\asymp n^{-d},\quad\x\in\mathbb{M},
\end{equation}
which  means that the Christoffel function for $\mathcal{P}_n$ is
asymptotically proportional to the reciprocal of the $n$ raised to
the power of the dimension $d$.

According to \cite[Corollary 2.4]{LGLW} and \eqref{2.9}, we obtain the
following lemma.
\begin{lem}\label{lem2.3}
Let $1\leq s, q<\infty$. Then
\begin{equation}\label{2.10}
\mathbb{E}\|P_{\a}\|^{s}_{q}\asymp
\Big\|\Big(\sum\limits_{k:\lambda_{k}\leq n}
|\phi_{k}|^{2}\Big)^{1/2}\Big\|^{s}_{q}=\|\Lambda^{-1/2}\|^{s}_{q}\asymp n^{ds/2}.
\end{equation}
\end{lem}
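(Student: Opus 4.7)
The proof plan is essentially a two-step application: invoking the cited \cite[Corollary 2.4]{LGLW} to convert the expected $L_q$-norm of the random diffusion polynomial into an $L_q$-norm of the diagonal of the reproducing kernel, and then inserting the two-sided Weyl estimate \eqref{1.2} (equivalently \eqref{2.9}) to compute that $L_q$-norm.

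The underlying reason why such an equivalence should hold is the following. For fixed $\x\in\mathbb{M}$, the random variable $P_{\a}(\x)=\sum_{k:\lambda_k\le n} a_k\phi_k(\x)$ is a centered Gaussian with variance
\[
\mathbb{E}|P_{\a}(\x)|^{2}=\sum_{k:\lambda_k\le n}|\phi_k(\x)|^{2}=\Lz(\x)^{-1},
\]
so every absolute moment $\mathbb{E}|P_{\a}(\x)|^{s}$ equals $c_{s}\Lz(\x)^{-s/2}$ for a constant $c_{s}>0$ depending only on $s$. When $s=q$, Fubini immediately gives
\[
\mathbb{E}\|P_{\a}\|_{q}^{q}=\int_{\mathbb{M}}\mathbb{E}|P_{\a}(\x)|^{q}\,d\mu(\x)=c_{q}\int_{\mathbb{M}}\Lz(\x)^{-q/2}\,d\mu(\x)=c_{q}\|\Lz^{-1/2}\|_{q}^{q}.
\]
For general $1\le s,q<\infty$ the pointwise identity has to be combined with Kahane--Khintchine-type equivalence of moments for Gaussian processes (the content of \cite[Corollary 2.4]{LGLW}) in order to replace $q$ by $s$ in the exponent and still obtain a two-sided bound; that step is exactly what the cited corollary packages for us, so I would simply quote it.

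Once the equivalence
\[
\mathbb{E}\|P_{\a}\|_{q}^{s}\asymp\Bigl\|\Bigl(\sum_{k:\lambda_k\le n}|\phi_k|^{2}\Bigr)^{1/2}\Bigr\|_{q}^{s}=\|\Lz^{-1/2}\|_{q}^{s}
\]
is in hand, the remaining step is routine: by \eqref{2.9} we have $\Lz(\x)^{-1/2}\asymp n^{d/2}$ uniformly in $\x\in\mathbb{M}$, and since $\mu$ is a normalized probability measure, taking the $L_{q}(\mathbb{M})$-norm gives $\|\Lz^{-1/2}\|_{q}\asymp n^{d/2}$, hence $\|\Lz^{-1/2}\|_{q}^{s}\asymp n^{ds/2}$, finishing the claimed chain of equivalences.

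The only real obstacle, had the result not been cited, would be justifying the moment-exchange step (passing from $s=q$ to arbitrary $s$), which for a centered Gaussian-coefficient polynomial follows from the concentration of $\|P_{\a}\|_{q}$ around its mean via Borell's inequality or a Dudley-type comparison; since this is exactly what \cite[Corollary 2.4]{LGLW} provides, the proof reduces to a one-line citation plus the trivial application of \eqref{2.9}.
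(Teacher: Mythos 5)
Your proposal is correct and follows essentially the same route as the paper, which proves Lemma \ref{lem2.3} exactly by combining \cite[Corollary 2.4]{LGLW} with the Christoffel-function estimate \eqref{2.9}. Your additional sketch of the Gaussian mechanism behind the cited corollary (pointwise Gaussian moments plus Fubini for $s=q$, and moment equivalence for general $s$) is a sound explanation of what that corollary packages, but it does not change the argument.
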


\section{Average Nikolskii factors for $\mathcal{P}_{n}$}\label{section3}

This section is devoted to giving asymptotic estimates of
$N^{\rm{ave}}_{2,q}(\mathcal{P}_{n})$ $(1\leq q\leq\infty)$ and
upper bound estimates of
$N^{\rm{ave}}_{\infty,2}(\mathcal{P}_{n})$ by utilising the upper
bound estimate of Schwartz kernel introduced in Section
\ref{section2}, the Marcinkiewicz-Zygmund inequalities on
$\mathbb{M}$, and the probability estimate techniques.

Before we proceed to state our main result, we introduce the concept of a maximal separated subset on $\mathbb{M}$. Let $\Xi$ be a finite subset of $ \mathbb{M}$. We say that $\Xi$
is \emph{$\varepsilon$-separated} if the minimum distance between
any two distinct points $\boldsymbol{\xi}$ and $\boldsymbol{\eta}$
in $\Xi$ is at least $\varepsilon$. In other words, we require
$$\min\limits_{\substack{\boldsymbol{\xi},\boldsymbol{\eta}\in\Xi
\\ \boldsymbol{\xi}\neq\boldsymbol{\eta}}}{\rm d}(\boldsymbol{\xi},\boldsymbol{\eta})\geq\varepsilon.$$
Furthermore, if an $\varepsilon$-separated subset $\Xi$ of
$\mathbb{M}$ satisfies
$$\max\limits_{\boldsymbol{\xi}\in\mathbb{M}}\inf_{\boldsymbol\eta\in \Xi}{\rm d}(\boldsymbol{\xi},\boldsymbol\eta)\leq\varepsilon,$$
then we refer to it as a \emph{maximal $\varepsilon$-separated} subset of
$\mathbb{M}$. In simpler terms, a maximal $\varepsilon$-separated subset $\Xi$ is
an $\varepsilon$-separated subset of $\mathbb{M}$ such that every point in $\mathbb{M}$
lies within a distance of $\varepsilon$ from at least one point in $\Xi$.
We can express this as
$$\mathbb{M}=\bigcup\limits_{\boldsymbol{\xi}\in\Xi}{\rm B}(\boldsymbol{\xi},\varepsilon).$$

We conclude from the Ahlfors $d$-regularity \eqref{2.1} that if
$\Xi$ is maximal $\vz$-separated then
\begin{equation}\label{3.1}
M:=\# \Xi \asymp \varepsilon^{-d},
\end{equation}
which  means that the cardinality of $\Xi$ is asymptotically
proportional to the reciprocal of the $\varepsilon$ raised to the
power of the dimension $d$.

Next, we present our main result in this section.
\begin{thm}\label{thm3.1}
Let $p=2$ and $1\leq q\leq\infty$. Then
\begin{equation}\label{3.2}
N^{\rm{ave}}_{2,q}(\mathcal{P}_{n})=
\mathbb{E}\frac{\|P_{\a}\|_{q}}{\|P_{\a}\|_{2}}\asymp
n^{-d/2}\Bbb E\|P_{\a}\|_q\asymp\left\{
\begin{aligned}
&1,&&1\leq q<\infty,\\
&\sqrt{\ln n},&&q=\infty,
\end{aligned}
\right.
\end{equation}
where the equivalent constants are independent of $n$.
\end{thm}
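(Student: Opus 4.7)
The plan is to reduce matters via Lemma \ref{lem2.2} to the estimation of $\mathbb{E}\|P_{\a}\|_q$ and then split on whether $q$ is finite. Applying \eqref{2.8} with $s=1$ gives $\mathbb{E}\frac{\|P_{\a}\|_q}{\|P_{\a}\|_2} \asymp n^{-d/2}\mathbb{E}\|P_{\a}\|_q$ for every $1\le q\le \infty$, which already secures the first equivalence in \eqref{3.2}. For $1\le q<\infty$, Lemma \ref{lem2.3} immediately produces $\mathbb{E}\|P_{\a}\|_q \asymp n^{d/2}$, so the $\asymp 1$ bound follows at once. All the real work lies in the endpoint $q=\infty$, where I need to prove $\mathbb{E}\|P_{\a}\|_\infty \asymp n^{d/2}\sqrt{\ln n}$.

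For the upper bound I would fix a maximal $(c/n)$-separated set $\Xi\subset\mathbb{M}$ with $c$ small enough that a Marcinkiewicz-Zygmund type discretization $\|P\|_\infty \ll \max_{\boldsymbol{\xi}\in\Xi}|P(\boldsymbol{\xi})|$ holds for every $P\in\mathcal{P}_n$; by \eqref{3.1} one has $\#\Xi\asymp n^d$. Each evaluation $P_{\a}(\boldsymbol{\xi})$ is a centered Gaussian with variance $e(\boldsymbol{\xi},\boldsymbol{\xi},n)\le c_2(d)n^d$ by \eqref{1.2}, so the standard Gaussian maximal inequality for a finite family of centered Gaussians delivers
$$\mathbb{E}\|P_{\a}\|_\infty \ll \mathbb{E}\max_{\boldsymbol{\xi}\in\Xi}|P_{\a}(\boldsymbol{\xi})| \ll \sqrt{\ln\#\Xi}\cdot n^{d/2} \asymp n^{d/2}\sqrt{\ln n}.$$

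The lower bound is the genuine obstacle, and my plan is to apply Sudakov's minoration to the centered Gaussian process $\{P_{\a}(\boldsymbol{\xi})\}_{\boldsymbol{\xi}\in\Xi'}$ indexed by a maximal $(C/n)$-separated set $\Xi'$ with $C$ a large absolute constant. By \eqref{1.2} the diagonal $e(\boldsymbol{\xi},\boldsymbol{\xi},n)\ge c_1(d)n^d$, while Lemma \ref{lem2.1} combined with the decay $\Phi_d(t)=O(t^{-(d+1)/2})$ that follows from \eqref{2.3} and \eqref{2.4} forces $|e(\boldsymbol{\xi},\boldsymbol{\eta},n)| \le \tfrac{c_1(d)}{2}n^d$ for all distinct $\boldsymbol{\xi},\boldsymbol{\eta}\in\Xi'$, once $C$ is chosen large enough. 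Therefore
$$\mathbb{E}|P_{\a}(\boldsymbol{\xi})-P_{\a}(\boldsymbol{\eta})|^2 = e(\boldsymbol{\xi},\boldsymbol{\xi},n)+e(\boldsymbol{\eta},\boldsymbol{\eta},n)-2e(\boldsymbol{\xi},\boldsymbol{\eta},n) \gg n^d$$
uniformly over such pairs. Since $\#\Xi'\asymp n^d$, Sudakov's inequality yields $\mathbb{E}\max_{\boldsymbol{\xi}\in\Xi'}P_{\a}(\boldsymbol{\xi}) \gg n^{d/2}\sqrt{\ln\#\Xi'} \asymp n^{d/2}\sqrt{\ln n}$, and $\|P_{\a}\|_\infty \ge \max_{\boldsymbol{\xi}\in\Xi'}P_{\a}(\boldsymbol{\xi})$ finishes the argument. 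The delicate point is the choice of $C$: it must be large enough that the oscillatory term $\Phi_d$ in Lemma \ref{lem2.1} is small at scale $C$ and swamps the $O(n^{d-1})$ remainder, yet fixed as an absolute constant so that $\#\Xi'$ retains order $n^d$ and the $\sqrt{\ln n}$ factor survives.
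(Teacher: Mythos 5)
Your proposal is correct and follows essentially the same route as the paper: the reduction via \eqref{2.8} and Lemma \ref{lem2.3} for finite $q$, Marcinkiewicz--Zygmund discretization plus the Gaussian maximal inequality for the upper bound at $q=\infty$, and a coarse $(C/n)$-separated net with off-diagonal kernel decay from Lemma \ref{lem2.1} and the Bessel asymptotics feeding a Sudakov-type minoration for the lower bound, which is exactly the lower half of the paper's Lemma \ref{lem3.2}. The only cosmetic differences are that the paper normalizes the point evaluations to $N(0,1)$ variables and extracts the sparse net as a subset of the original MZ set, whereas you work with the unnormalized Gaussians and simply use $\|P_{\a}\|_\infty\ge\max_{\boldsymbol\xi\in\Xi'}P_{\a}(\boldsymbol\xi)$.
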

\begin{proof}
For $1\leq q<\infty$, \eqref{2.8} and \eqref{2.10} give
$$N^{\rm{ave}}_{2,q}(\mathcal{P}_{n})\asymp n^{-d/2}\Bbb
E\|P_{\a}\|_q\asymp 1.$$

It remains to estimate $N^{\rm{ave}}_{2,\infty}(\mathcal{P}_{n})$.
We need the following two lemmas. The first lemma gives the
estimate of the expectation of the uniform norm of a centered
Gaussian random vector.
\begin{lem}\label{lem3.2} {\rm (\cite[(5.23)]{P})}
Let $X=\{X_{j}\}^{M}_{j=1}$ is a centered  Gaussian random vector.
Then
$$C_{1}(\ln M)^{1/2}\inf_{1\leq i\neq j\leq
M}\|X_{i}-X_{j}\|_2\leq\mathbb{E}\max\limits_{1\leq i\leq
M}|X_{i}|\leq C_{2}(\ln M)^{1/2}\sup_{1\leq i\neq j\leq
M}\|X_{i}-X_{j}\|_2$$
for some absolute constants $C_{1}$ and
$C_{2}$, where $\|X_i-X_j\|_2=\big(\Bbb E |X_i-X_j|^2\big)^{1/2}$.
\end{lem}

The second lemma is about  Marcinkiewicz-Zygmund inequalities on
$\mathbb{M}$.
\begin{lem}{\rm (\cite[Theorem 5.1]{FM}).}\label{lem3.3}
Let $1\le p\le\infty$. Then there exists a constant $\delta_0>0$ depending only on $d$ and $p$ such that
given a maximal $\delta/n$-separated subset $\Xi$ of $\Bbb M$ with $\delta\in(0,\delta_0]$,
there exists a set of positive numbers $\{\lambda_{\boldsymbol\xi}\}_{ \boldsymbol\xi\in\Xi}$ such that for all  $f\in\mathcal{P}_n$,
$$\|f\|_{p}\asymp \Bigg\{\begin{aligned}&\Big(\sum_{\boldsymbol\xi\in\Xi}\lambda_{\boldsymbol\xi}|f(\boldsymbol\xi)|^p\Big)^{1/p},  &&1\le p<\infty,\\
&\max\limits_{\boldsymbol\xi\in\Xi}|f(\boldsymbol\xi)|, &&p=\infty,
\end{aligned}$$
and $\lambda_{\boldsymbol\xi}\asymp \mu \left({\rm B}(\boldsymbol\xi,\delta/n)\right)\asymp n^{-d}$
for all $\boldsymbol\xi\in\Xi$, where the constants of equivalence depend only on $d$ and $p$.
\end{lem}
Let $\Xi=\{{\boldsymbol\xi_{1}},\ldots,{\boldsymbol\xi_{M}}\}$ be
a maximal $\frac{\delta_0}{n}$-separated subset of $\mathbb{M}$,
i.e.,
$${\rm d}({\boldsymbol\xi_{i}},{\boldsymbol\xi_{j}})\geq\frac{\delta_0}{n}\ \ (i\neq j)$$ and
$$\bigcup^{M}_{j=1}{\bf
 B}({\boldsymbol\xi_{j}},\frac{\delta_0}{n})=\mathbb{M}.$$

We note that $M\asymp N\asymp n^{d}$, where $M=\#\Xi$ is the
cardinality of the set $\Xi$. We set
$$X=\{X_{j}\}^{M}_{j=1},\ \ X_j=\frac{P_{\a}({\boldsymbol\xi_j})}{\sqrt{e({\boldsymbol\xi_{j}},{\boldsymbol\xi_{j}},n)}},\ \ 1\leq j\leq M,$$
where $P_{\a}$ is the random diffusion polynomial given by
\eqref{1.7} and $\a\sim N(0,I_N)$. Then, $X$ is a  centered
Gaussian random vector with covariance matrix
$C=(C_{i,j})_{i,j=1}^M$, where $C_{i,j}=\Bbb E(X_i X_j),\ 1\le
i,j\le M$. We note that
$$\Bbb E(a_{k}
 a_{l})=\left\{
\begin{aligned}
 &0,&\ \,k\neq l,\\
 &1,&\ \,k=l.
\end{aligned}
\right.$$
It follows that for $1\leq j\leq M$,
$$\mathbb{E}X^{2}_{j}=\frac{\mathbb{E}P^{2}_{\a}({\boldsymbol\xi_j})}{e({\boldsymbol\xi_{j}},{\boldsymbol\xi_{j}},n)}=\frac{\sum_{k:\lambda_{k}\leq n}|\phi_{k}({\boldsymbol\xi_{j}})|^{2}}{e({\boldsymbol\xi_{j}},{\boldsymbol\xi_{j}},n)}=1,$$
giving $X_{j}\sim N(0,1)$. This implies that
\begin{align}\label{3.3}
\Bbb{E}(X_{i}-X_{j})^{2}&=\Bbb{E}\bigg(\frac{P_{\a}({\boldsymbol\xi_{i}})}{\sqrt{e({\boldsymbol\xi_{i}},{\boldsymbol\xi_{i}},n)}}-\frac{P_{\a}({\boldsymbol\xi_{j}})}{\sqrt{e({\boldsymbol\xi_{j}},{\boldsymbol\xi_{j}},n)}}\bigg)^{2}\notag\\
&=\mathbb{E}\bigg(\sum_{k:\lambda_{k}\leq n}a_k\bigg(\frac{\phi_{k}({\boldsymbol\xi_{i}})}{\sqrt{e({\boldsymbol\xi_{i}},{\boldsymbol\xi_{i}},n)}}-\frac{\phi_{k}({\boldsymbol\xi_{j}})}{\sqrt{e({\boldsymbol\xi_{j}},{\boldsymbol\xi_{j}},n)}}\bigg)\bigg)^{2}\notag\\
&=\sum_{k:\lambda_{k}\leq n}\bigg(\frac{\phi_{k}({\boldsymbol\xi_{i}})}{\sqrt{e({\boldsymbol\xi_{i}},{\boldsymbol\xi_{i}},n)}}-\frac{\phi_{k}({\boldsymbol\xi_{j}})}{\sqrt{e({\boldsymbol\xi_{j}},{\boldsymbol\xi_{j}},n)}}\bigg)^{2}\notag\\
&=2-\frac{2e({\boldsymbol\xi_{i}},{\boldsymbol\xi_{j}},n)}{\sqrt{e({\boldsymbol\xi_{i}},{\boldsymbol\xi_{i}},n)e({\boldsymbol\xi_{j}},{\boldsymbol\xi_{j}},n)}}.
\end{align}
By \eqref{3.3} and \eqref{2.2} we obtain
$$\sup_{i\neq j}
\|X_{i}-X_{j}\|_2\leq2.$$
Thus, using \eqref{2.8}, Lemmas
\ref{lem3.3} and \ref{lem3.2}, we obtain
\begin{align}\label{3.4}
N^{\rm{ave}}_{2,\infty}(\Pi^{d}_{n})&\asymp n^{-d/2}\Bbb
E\|P_{\a}\|_\infty\asymp  n^{-d/2}\Bbb E \max_{1\le j\le
M}|P_{\a}({\boldsymbol\xi_{j}})|\asymp\Bbb E\max_{1\le j\le M}|X_j|\notag\\
&\ll\sqrt{\ln M}\sup_{i\neq j} \|X_{i}-X_{j}\|_2\ll\sqrt{\ln n}.
\end{align}

Now we want to find a subset $\widetilde\Xi=\{\tilde
{\boldsymbol\xi}_1,\dots,\tilde{\boldsymbol\xi}_{\widetilde M}\}$
of $\Xi$ such that $\widetilde \Xi$ is $\frac\delta n$-separated
and
$$\bigcup_{\tilde{\boldsymbol\xi}\in\widetilde\Xi}B(\tilde{\boldsymbol\xi},\frac{2\delta}n)=\Bbb
M,$$
 where $\delta>\delta_0$ is a sufficiently large constant chosen later. Such $\widetilde\Xi$
 exists and $\widetilde M=\#\widetilde \Xi\asymp n^d$.
Indeed, choose arbitrary  $\tilde{\boldsymbol\xi}_1$ in $\Xi$. Set
$$A_{\tilde{\boldsymbol\xi}_1}:= \{\tilde{\boldsymbol\xi}\in \Xi \
:\ d(\tilde{\boldsymbol\xi}_1, \tilde{\boldsymbol\xi})\ge
\frac{\delta}n\}.$$
If $A_{\tilde{\boldsymbol\xi}_1}\neq
\emptyset$, then there exists ${\tilde{\boldsymbol\xi}_2\in
A_{\tilde{\boldsymbol\xi}_1}}$. Set
$$A_{\tilde{\boldsymbol\xi}_2}:= \{\tilde{\boldsymbol\xi}\in A_{\tilde{\boldsymbol\xi}_1} \
:\ d(\tilde{\boldsymbol\xi}_2, \tilde{\boldsymbol\xi})\ge
\frac{\delta}n\}.$$ Continuing the above step until
$$A_{\tilde{\boldsymbol\xi}_{\widetilde M-1}}\neq \emptyset\ \
{\rm and}\ \ A_{\tilde{\boldsymbol\xi}_{\widetilde M}}=
\emptyset.$$
Then the set  $\widetilde\Xi=\{\tilde
{\boldsymbol\xi}_1,\dots,\tilde{\boldsymbol\xi}_{\widetilde M}\}$
is a subset of $\Xi$ and $\frac{\delta}n$-separated. It follows
that $\widetilde M\ll n^d$. Since
$A_{\tilde{\boldsymbol\xi}_{\widetilde M}}= \emptyset,$ we get
$$\Xi\subset \bigcup_{\tilde{\boldsymbol\xi}\in
\widetilde\Xi}B(\tilde{\boldsymbol\xi},\frac{\delta} n).$$
We recall that $ \bigcup_{{\boldsymbol\xi}\in
\Xi}B({\boldsymbol\xi},\frac{\delta_0} n)=\Bbb M$. For $\x\in \Bbb
M$, there exists ${\boldsymbol\xi}\in \Xi$ such that
${\rm d}(\x,{\boldsymbol\xi})\le \frac{\delta_0}n$. For such
${\boldsymbol\xi}$, there exists $\tilde {\boldsymbol\xi}\in
\widetilde\Xi$ such that
${\rm d}({\boldsymbol\xi},\tilde{\boldsymbol\xi})\le \frac\delta n$. By
the triangular inequality we get ${\rm d}(\x,\tilde{\boldsymbol\xi})\le
\frac{\delta+\delta_0}n\le \frac{2\delta}n$. It follows that
$$\bigcup_{\tilde{\boldsymbol\xi}\in\widetilde\Xi}B(\tilde{\boldsymbol\xi},\frac{2\delta}n)=\Bbb
M.$$
Hence, we have
$$1\asymp
\mu(M)=\mu\bigg(\bigcup_{\tilde{\boldsymbol\xi}\in\widetilde\Xi}B(\tilde{\boldsymbol\xi},\frac{2\delta}n)\bigg)\le
\sum_{\tilde{\boldsymbol\xi}\in\widetilde\Xi}\mu\Big(B(\tilde{\boldsymbol\xi},\frac{2\delta}n)\Big)\ll
\# \widetilde\Xi\cdot \Big( \frac{2\delta}n\Big)^d,$$
giving
$\widetilde M=\# \widetilde\Xi\gg n^d.$

We set
$$\widetilde X=\{\widetilde X_{j}\}^{\widetilde M}_{j=1},\ \ \widetilde X_j=
\frac{P_{\a}(\tilde{\boldsymbol\xi}_j)}{\sqrt{e(\tilde{\boldsymbol\xi}_{j},\tilde{\boldsymbol\xi}_{j},n)}},\
\ 1\leq j\leq \widetilde M,$$
where $P_{\a}$ is the random
diffusion polynomial given by \eqref{1.7} and $\a\sim N(0,I_N)$.
Then, $\widetilde X$ is a  centered Gaussian random vector with
covariance matrix $\widetilde C=(\widetilde
C_{i,j})_{i,j=1}^{\widetilde M}$, where $\widetilde C_{i,j}=\Bbb
E(\widetilde X_i \widetilde X_j),\ 1\le i,j\le \widetilde M$. We
also have $\widetilde X_j\sim N(0,1)$ and for $1\le i\neq j\le
\widetilde M$
\begin{equation}\label{3.5}
\Bbb{E}(\widetilde X_{i}-\widetilde
X_{j})^{2}=2-\frac{2e(\tilde{\boldsymbol\xi}_{i},\tilde{\boldsymbol\xi}_{j},n)}{\sqrt{e(\tilde{\boldsymbol\xi}_{i},
\tilde{\boldsymbol\xi}_{i},n)e(\tilde{\boldsymbol\xi}_{j},\tilde{\boldsymbol\xi}_{j},n)}}.
\end{equation}

 Using \eqref{2.3}, there exists a positive number $t_{0}>\delta_0$ such that $t>t_{0}$,
$$J_{d/2}(t)\ll t^{-1/2},$$
and thus, together with \eqref{2.4}, when $n{\rm
d}({\boldsymbol\xi},{\boldsymbol\eta})>t_{0},\
{\boldsymbol\xi},{\boldsymbol\eta}\in \Bbb M$,
\begin{equation}\label{3.6}
\Phi_{d}(n{\rm d}({\boldsymbol\xi},{\boldsymbol\eta}))\leq
c(d)(n{\rm
d}({\boldsymbol\xi},{\boldsymbol\eta}))^{-\frac{d+1}{2}}.
\end{equation}
Choose $\delta>t_{0}$ big enough such that
$$\frac{c_{1}(d)}{4}+c(d)\delta^{-\frac{d+1}{2}}\leq\frac{c_{1}(d)}{2},$$
where $c_{1}(d)$ is given by \eqref{1.2}. Hence, by \eqref{2.5}
and \eqref{3.6} we have for $1\le i\neq j\le \widetilde M$ and
sufficiently large $n$,
\begin{align*}
e(\tilde {\boldsymbol\xi}_{i},\tilde{\boldsymbol\xi}_{j},n)&\leq\Big(\frac{c_{1}(d)}{4}+\Phi_{d}(n{\rm d}(\tilde{\boldsymbol\xi}_{i},
\tilde{\boldsymbol\xi}_{j}))\Big)n^{d}\leq\Big(\frac{c_{1}(d)}{4}+c(d)(n{\rm d}(\tilde{\boldsymbol\xi}_{i},\tilde{\boldsymbol\xi}_{j}))^{-\frac{d+1}{2}}\Big)n^{d}\\
&\leq\Big(\frac{c_{1}(d)}{4}+c(d)\delta^{-\frac{d+1}{2}}\Big)n^{d}\leq\frac{c_{1}(d)}{2}n^{d},
\end{align*}
where in the third inequality, we used the fact that $n{\rm
d}(\tilde{\boldsymbol\xi_{i}},\tilde{\boldsymbol\xi_{j}})\geq\delta$.
It follows from \eqref{3.5} and \eqref{1.2} that
$$\inf_{i\neq j}\|\widetilde X_{i}-\widetilde X_{j}\|^{2}_2=\inf_{i\neq j}\Bbb{E}(\widetilde X_{i}-\widetilde X_{j})^{2}
\geq
2-\frac{2e(\tilde{\boldsymbol\xi_{i}},\tilde{\boldsymbol\xi_{j}},n)}{c_{1}(d)n^{d}}\geq
1.$$
Thus, by \eqref{2.8} and Lemma \ref{lem3.2}, we have
\begin{align}\label{3.7}
N^{\rm{ave}}_{2,\infty}(\Pi^{d}_{n})&\asymp n^{-d/2}\Bbb
E\|P_{\a}\|_\infty\asymp n^{-d/2}\Bbb E \max_{1\le j\le
M}|P_{\a}({\boldsymbol\xi_{j}})|\asymp\Bbb E\max_{1\le j\le
M}|X_j|\notag\\ &\ge \Bbb E\max_{1\le j\le \widetilde
M}|\widetilde X_j| \gg\sqrt{\ln \widetilde M}\inf_{i\neq
j}\|\widetilde X_{i}-\widetilde X_{j}\|_2\geq \sqrt{\ln \widetilde
M}\asymp \sqrt{\ln n} .
\end{align}

A combination of \eqref{3.4} and \eqref{3.7} yields
\begin{equation}\label{3.8}
N^{\rm{ave}}_{2,\infty}(\Pi^{d}_{n})\asymp\Bbb E \max_{1\le j\le M}|X_j|\asymp \sqrt{\ln
n}.\end{equation}

Theorem \ref{thm3.1} is proved.
 \end{proof}

Next, we want to get the upper bound of $\mathbb{E}\|P_{\a}\|^{-r}_{\infty}$. We need the following lemma.
\begin{lem}\label{lem3.4} {\rm (\cite[Theorem 4]{LO},\, \cite[Theorem 3.3]{PV}.)} Let $X=\{X_j\}^{M}_{j=1}\in\mathbb{R}^{M}$ is a centered Gaussian
random vector  and $X_{j}\sim N(0,1),\  1\leq j\leq M$. Then, for
all $t\in(0,1/2)$, we have
\begin{equation*}
\mathbb{P}(\max_{1\leq j\leq M}|X_{j}|\leq tm_{X})\leq
\frac{1}{2}e^{-\frac{1}{4}m^{2}_{X}\ln(\frac{1}{2t})},
\end{equation*}
where $m_{X}$ denotes the median of $\max\limits_{1\leq j\leq
M}|X_{j}|$, that is
$$\mathbb{P}(\max\limits_{1\leq j\leq M}|X_{j}|<
m_{X})\leq\frac{1}{2}\ \ { and}\ \
\mathbb{P}(\max\limits_{1\leq j\leq M}|X_{j}|>
m_{X})\leq\frac{1}{2}.$$
In particular, if $m_{X}\geq
c_{0}\sqrt{\ln n}$, then for all $t\in(0,1/2)$, we have
\begin{equation}\label{3.9}
\mathbb{P}(\max_{1\leq j\leq M}|X_{j}|\leq c_{0}t\sqrt{\ln n})\leq
\frac{1}{2}(2t)^{\frac{1}{4}c_{0}^{2}\ln n}.
\end{equation}
\end{lem}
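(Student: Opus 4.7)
Both inequalities are stated as applications of external results in \cite{LO,PV}, so my plan is to (i) verify that the first bound follows directly from the cited small-ball probability estimate applied to $F(x):=\max_{1\le j\le M}|x_j|$, and (ii) deduce the second bound from the first by a short rescaling.

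For step (i), I observe that $F$ is a symmetric (even) convex, $1$-homogeneous function on $\mathbb{R}^M$ which is $1$-Lipschitz in the Euclidean norm, that $X$ is centered Gaussian with each $X_j\sim N(0,1)$, and that $m_X$ is by definition the median of $F(X)$. The small-ball inequality of Latala--Oleszkiewicz (sharpened by Paouris--Valettas) then gives, for every $t\in(0,1/2)$,
$$\mathbb{P}\bl(F(X)\le tm_X\br)\le \tfrac{1}{2}\,e^{-\tfrac{1}{4}m_X^{2}\ln(1/(2t))},$$
which is the first claim. If I had to reprove this, I would apply the S-inequality of Latala--Oleszkiewicz to compare $\{F(X)\le m_X\}$ with a symmetric slab of equal Gaussian measure, and then use the sharp one-dimensional Gaussian tail $1-\Phi(u)\asymp e^{-u^{2}/2}/u$ together with the convex symmetry of $F$ to extract the $\ln(1/(2t))$ factor.

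For step (ii), I set $s:=c_0 t\sqrt{\ln n}/m_X$; since $m_X\ge c_0\sqrt{\ln n}$ and $t\in(0,1/2)$, we have $0<s\le t<1/2$, so the first bound applies with $s$ in place of $t$:
$$\mathbb{P}\bl(\max_{1\le j\le M}|X_j|\le c_0 t\sqrt{\ln n}\br)=\mathbb{P}\bl(F(X)\le sm_X\br)\le \tfrac{1}{2}\,e^{-\tfrac{1}{4}m_X^{2}\ln(1/(2s))}.$$
Using $1/(2s)=m_X/(2c_0 t\sqrt{\ln n})\ge 1/(2t)$ and $m_X^{2}\ge c_0^{2}\ln n$, the exponent is at least $\tfrac{1}{4}c_0^{2}(\ln n)\ln(1/(2t))$, giving the claimed $\tfrac{1}{2}(2t)^{c_0^{2}\ln n/4}$ in \eqref{3.9}.

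The main obstacle is the first inequality. Naive Gaussian concentration via Borell's isoperimetric inequality, applied to the $1$-Lipschitz function $F$, only yields $\mathbb{P}(F(X)\le tm_X)\le e^{-(1-t)^{2}m_X^{2}/2}$, which misses the $\ln(1/(2t))$ factor and is much too weak when $t$ is very small. Producing the logarithmic improvement genuinely requires the convex symmetry of $F$, exploited through the S-inequality and the sharp one-dimensional Gaussian tail, rather than mere Lipschitz concentration. Once that first inequality is available, step (ii) is pure arithmetic.
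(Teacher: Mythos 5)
Your proposal matches the paper's treatment: the paper gives no proof of the first inequality either, simply citing Latala--Oleszkiewicz and Paouris--Valettas for the small-ball bound for the (symmetric, convex) function $\max_j|X_j|$, and the ``in particular'' case is exactly the elementary substitution/monotonicity computation you carry out (your rescaling $s=c_0t\sqrt{\ln n}/m_X$ is equivalent to noting $c_0t\sqrt{\ln n}\le tm_X$ and using $m_X^2\ge c_0^2\ln n$). Your added sketch via the S-inequality and the remark that plain Lipschitz concentration would not suffice are consistent with the cited sources, so the approach is essentially the same as the paper's.
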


\begin{thm}\label{thm3.5}
Let $r>0$. Then for $\ln n>\frac{4r+4}{c^{2}_{0}}$ and $N>r$, we have
\begin{equation}\label{3.10}
\mathbb{E}\|P_{\a}\|^{-r}_{\infty}\ll n^{-rd/2}(\ln
n)^{-r/2}.
\end{equation}
\end{thm}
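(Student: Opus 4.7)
The plan is to reduce the expected negative moment of $\|P_{\a}\|_\infty$ to a negative moment of a Gaussian maximum, by using the \emph{lower} bound
$$\|P_{\a}\|_\infty \gg \max_{1\le j\le\widetilde M}|P_{\a}(\tilde{\boldsymbol\xi}_j)| \asymp n^{d/2}\max_{1\le j\le\widetilde M}|\widetilde X_j|$$
arising from Lemma \ref{lem3.3} combined with the separated subset $\widetilde\Xi$ constructed in the proof of Theorem \ref{thm3.1}. This yields
$$\mathbb{E}\|P_{\a}\|^{-r}_{\infty}\ll n^{-rd/2}\,\mathbb{E}\bigl(\max_{1\le j\le\widetilde M}|\widetilde X_j|\bigr)^{-r},$$
so the task becomes to show $\mathbb{E}\bigl(\max_{j}|\widetilde X_j|\bigr)^{-r}\ll (\ln n)^{-r/2}$. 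Here the $\widetilde X_j\sim N(0,1)$ satisfy $\inf_{i\ne j}\|\widetilde X_i-\widetilde X_j\|_2\ge 1$, which together with Lemma \ref{lem3.2} and $\widetilde M\asymp n^d$ gives $\mathbb{E}\max_j|\widetilde X_j|\gg\sqrt{\ln n}$; a standard Gaussian concentration argument (Borell--TIS, applied to the $1$-Lipschitz functional $\max_j|\cdot|$ of a standard Gaussian vector) then upgrades this to $m_{\widetilde X}\ge c_0\sqrt{\ln n}$ for some absolute $c_0>0$, putting us in the regime where Lemma \ref{lem3.4} applies.

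Next I would write the negative moment via layer cake:
$$\mathbb{E}\bigl(\max_j|\widetilde X_j|\bigr)^{-r}=\int_0^\infty r s^{-r-1}\,\mathbb{P}\bigl(\max_j|\widetilde X_j|<s\bigr)\,ds,$$
and split the integral at $s_0=\tfrac{c_0}{2}\sqrt{\ln n}$. On $(s_0,\infty)$ the trivial bound on the probability gives a contribution of order $s_0^{-r}\asymp(\ln n)^{-r/2}$, which is the target order. On $(0,s_0)$, I would substitute $s=c_0 t\sqrt{\ln n}$ with $t\in(0,1/2)$ and apply \eqref{3.9} to obtain
$$\int_0^{s_0} r s^{-r-1}\,\mathbb{P}\bigl(\max_j|\widetilde X_j|<s\bigr)\,ds \ll (\ln n)^{-r/2}\int_0^1 u^{c_0^2(\ln n)/4-r-1}\,du,$$
after a clean change of variables.

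The remaining inner integral converges iff the exponent $c_0^2(\ln n)/4-r-1>-1$, i.e.\ $\ln n>4r/c_0^2$; the slightly stronger hypothesis $\ln n>(4r+4)/c_0^2$ in the statement ensures in fact $c_0^2(\ln n)/4-r\ge 1$, so the integral is $O(1/\ln n)=O(1)$, giving the contribution of $(0,s_0)$ also $\ll(\ln n)^{-r/2}$. Combining the two pieces yields \eqref{3.10}; the condition $N>r$ is used only implicitly, to ensure that the quantity $\mathbb{E}\|P_{\a}\|_\infty^{-r}$ is finite (since $\|P_{\a}\|_\infty\gtrsim \|P_{\a}\|_2=|\a|$ and $|\a|^{-r}$ is Gaussian-integrable precisely when $N>r$).

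The main obstacle, beyond bookkeeping, is the verification that $m_{\widetilde X}\gg\sqrt{\ln n}$: one must pass from the expectation lower bound produced in Theorem \ref{thm3.1} to a median lower bound. This is where Gaussian concentration is essential --- absent such a tool one would only control the expectation, not the median required by Lemma \ref{lem3.4}. Everything else is a clean layer-cake estimate in which the growing exponent $c_0^2(\ln n)/4$ in the probability bound dominates the negative power $s^{-r-1}$ precisely under the hypothesis on $\ln n$.
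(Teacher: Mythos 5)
Your proposal is correct and follows essentially the same route as the paper: discretize via the separated point set to reduce $\mathbb{E}\|P_{\a}\|_\infty^{-r}$ to $n^{-rd/2}\,\mathbb{E}\bigl(\max_j|X_j|\bigr)^{-r}$, lower-bound the median by $c_0\sqrt{\ln n}$ using the expectation bound from Theorem \ref{thm3.1} together with Gaussian mean--median comparison (you invoke Borell--TIS, the paper cites \cite[Corollary 3.2]{LT} --- the same fact), and then apply the small-ball estimate \eqref{3.9} in a layer-cake integral split at $\tfrac{c_0}{2}\sqrt{\ln n}$, exactly as in the paper's proof of Theorem \ref{thm3.5}. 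Your choice to work with $\widetilde\Xi$ rather than the full set $\Xi$, and your remark on the role of $N>r$, are immaterial variations.
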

\begin{proof} As in the proof of Theorem \ref{thm3.1},
we set
$$X=\{X_j\}^{M}_{j=1},\ \ X_j=\frac{P_{\a}({\boldsymbol\xi_j})}{\sqrt{e({\boldsymbol\xi_{j}},{\boldsymbol\xi_{j}},n)}}, \ 1\le j\le
 M,$$
where $\Xi=\{{\boldsymbol\xi_{1}},\ldots,{\boldsymbol\xi_{M}}\}$ is a maximal  $\frac{\delta_0}n$-separated subset of $\mathbb{M}$. Then $X$ is a centered Gaussian random vector and $X_{j}\sim N(0,1),1\leq j\leq M$. Therefore,
\begin{align*}
\mathbb{E}\|P_{\a}\|^{-r}_{\infty}
&\leq\mathbb{E}\Big(\max\limits_{1\leq j\leq
M}|P_{\a}({\boldsymbol\xi_{j}})|\Big)^{-r}\asymp n^{-\frac{dr}{2}}\mathbb{E}\Big(\max\limits_{1\leq
j\leq M} |X_{j}|\Big)^{-r}.
\end{align*}
By \cite[Corollary 3.2]{LT} and (\ref{3.8}), we have
$$m_{X}\asymp\mathbb{E}\max_{1\leq j\leq M}|X_{j}|\asymp\sqrt{\ln
n},$$
which implies that
$$m_{X}\geq c_{0}\sqrt{\ln n},$$
where $m_X$ is the median of $\max\limits_{1\leq j\leq
M}|X_{j}|$, and $c_0>0$ is a constant depending only on $d$. By \eqref{3.9} with $s=c_{0}t\sqrt{\ln n}$, we get for any
$s\in(0,\frac{c_{0}}{2}\sqrt{\ln n})$
\begin{equation}\label{3.11}
\mathbb{P}\Big(\max\limits_{1\leq j\leq M}|X_{j}|\leq
s\Big)\leq\frac{1}{2}\Big(\frac{2s}{c_0\sqrt{\ln
n}}\Big)^{\frac{1}{4}c_0^{2}\ln n}.
\end{equation}

By the variable substitution $t=s^{-1}$, we have
\begin{align*}
&\mathbb{E}\Big(\max\limits_{1\leq j\leq M}|X_{j}|\Big)^{-r}
=r\int^{\infty}_{0}t^{r-1}\mathbb{P}\Big(\Big(\max\limits_{1\leq
j\leq M}
|X_{j}|\Big)^{-1}> t\Big)dt
=r\int^{\infty}_{0}t^{r-1}\mathbb{P}\Big(\max\limits_{1\leq j\leq M}
|X_{j}|<\frac{1}{t}\Big)dt\\
&\le r\int^{\infty}_{0}\mathbb{P}\Big(\max\limits_{1\leq j\leq M}|X_{j}|\leq s\Big)\frac{ds}{s^{r+1}}
\asymp\Big(\int^{\frac{c_{0}}{2}\sqrt{\ln n}}_{0}+\int^{\infty}_{\frac{c_{0}}{2}\sqrt{\ln n}}
\Big)\mathbb{P}\Big(\max\limits_{1\leq j\leq M}|X_{j}|\leq s\Big)\frac{ds}{s^{r+1}}
=I+II,
\end{align*}
where $c_0$ is the constant given in \eqref{3.11}.

We note that
\begin{equation}\label{3.12}
\frac{1}{s^{r+1}}\mathbb{P}\Big(\max\limits_{1\leq j\leq
M}|X_{j}|\leq s\Big)\leq
\frac{1}{2s^{r+1}}\Big(\frac{2s}{c_0\sqrt{\ln
n}}\Big)^{\frac{1}{4}c_0^{2}\ln n},
\end{equation}
and
\begin{equation}\label{3.13}
\mathbb{P}\Big(\max\limits_{1\leq j\leq M}|X_{j}|\leq s\Big)\leq1.
\end{equation}
Using the variable substitution
$s=\frac{c_{0}\sqrt{\ln n}}{2}t$, by \eqref{3.12} we obtain
\begin{align*}
I&=\int^{\frac{c_{0}}{2}\sqrt{\ln n}}_{0}\mathbb{P}\Big(\max\limits_{1\leq j\leq M}|X_{j}|\leq s\Big)\frac{1}{s^{r+1}}ds\ll\int^{\frac{c_{0}}{2}\sqrt{\ln n}}_{0}\frac{1}{s^{r+1}}\Big(\frac{2s}{c_{0}\sqrt{\ln n}}\Big)^{\frac{1}{4}c^{2}_{0}\ln n}ds\\
&=\frac{2^{r}}{(c_{0}\sqrt{\ln n})^{r}}\int^{1}_{0}t^{\frac{1}{4}c^{2}_{0}\ln n-r-1}dt=\frac{2^{r}}{(c_{0}\sqrt{\ln n})^{r}(\frac{1}{4}c^{2}_{0}\ln n-r)}\ll(\ln n)^{-\frac{r}{2}}.
\end{align*}
In view of \eqref{3.13},
\begin{align*}
II&=\int^{\infty}_{c_{0}(\ln n)^{1/2}}
\mathbb{P}\Big(\max\limits_{1\leq j\leq M}|X_{j}|\leq
s\Big)\frac{1}{s^{r+1}}ds\leq\int^{\infty}_{c_{0}(\ln
n)^{1/2}}\frac{1}{s^{r+1}}ds\asymp(\ln n)^{-\frac{r}{2}}.
\end{align*}
Hence, we obtain
\begin{align*}
\mathbb{E}\Big(\max\limits_{1\leq k\leq M}
|X_{k}|\Big)^{-r}&\ll(\ln n)^{-\frac{r}{2}}.
\end{align*}
This deduces that
$$\mathbb{E}\|P_{\a}\|^{-r}_{\infty}\ll
n^{-\frac{dr}{2}}(\ln n)^{-\frac{r}{2}},$$
which completes the proof of Theorem \ref{thm3.5}.
\end{proof}

\section{proof of Theorem \ref{thm1.1}}\label{section4}

\begin{proof} We first prove the upper bounds.

Let $1\le p< q\le \infty$.
 By the  H\"{o}lder inequality, we have
$$\|P_{\a}\|^{2}_{2}\leq\|P_{\a}\|^{\frac{1}{2}}_{1}
\|P_{\a}\|^{\frac{3}{2}}_{3},$$
which implies that
$$\mathbb{E}\frac{\|P_{\a}\|_{q}}{\|P_{\a}\|_{1}}\leq\mathbb{E}
\bigg(\frac{\|P_{\a}\|_{q}\|P_{\a}\|^{3}_{3}}{\|P_{\a}\|^{4}_{2}}\bigg)
=\mathbb{E}\bigg(\frac{\|P_{\a}\|_{q}}{\|P_{\a}\|^{2}_{2}}
\frac{\|P_{\a}\|^{3}_{3}}{\|P_{\a}\|^{2}_{2}}\bigg)
\leq\bigg(\mathbb{E}\frac{\|P_{\a}\|^{2}_{q}}
{\|P_{\a}\|^{4}_{2}}
\bigg)^{1/2}\bigg(\mathbb{E}\frac{\|P_{\a}\|^{6}_{3}}
{\|P_{\a}\|^{4}_{2}}\bigg)^{1/2}.$$
By (\ref{2.6}) we deduce that
$$\mathbb{E}\frac{\|P_{\a}\|^{2}_{q}}{\|P_{\a}\|^{4}_{2}}\asymp
n^{-2d}\mathbb{E}\|P_{\a}\|^{2}_{q},$$
and
$$\mathbb{E}\frac{\|P_{\a}\|^{6}_{3}}{\|P_{\a}\|^{4}_{2}}
\asymp n^{-2d}\mathbb{E}\|P_{\a}\|^{6}_{3}.$$
According to  \cite[Theorem 1]{WZ} and (\ref{3.2}), we have for
$s\ge1$,
\begin{equation}\label{4.1}\mathbb{E}\|P_{\a}\|_{q}^{s}\asymp \left\{
\begin{aligned}
 &n^{ds/2},&&\ \,1\leq q<\infty,\\
 &n^{ds/2}(\ln n)^{s/2},&&\ \, q=\infty.
\end{aligned}
\right.
\end{equation}
It follows  that
\begin{equation*}
\mathbb{E}\frac{\|P_{\a}\|_{q}}{\|P_{\a}\|_{p}}\leq\mathbb{E}\frac{\|P_{\a}\|_{q}}{\|P_{\a}\|_{1}}\ll\left\{
\begin{aligned}
 &1,&\ \,1\leq p\leq q<\infty,\\
 &\sqrt{\ln n},&\ \,1\leq p<q=\infty.
\end{aligned}
\right.
\end{equation*}

Let $1\leq q< p\leq\infty$.  For $p<\infty$,  we have
$$\mathbb{E}\frac{\|P_{\a}\|_{q}}{\|P_{\a}\|_{p}}\ll 1.$$
For $p=\infty$, by the Cauchy inequality we have
\begin{align*}
\mathbb{E}\frac{\|P_{\a}\|_{q}}{\|P_{\a}\|_{\infty}}&
=\mathbb{E}\bigg(\frac{\|P_{\a}\|_{q}}{\|P_{\a}\|_{2}}\frac{\|P_{\a}\|_{2}}{\|P_{\a}\|_{\infty}}\bigg)
\leq\bigg(\mathbb{E}\frac{\|P_{\a}\|^{2}_{q}}{\|P_{\a}\|^{2}_{2}}\bigg)^{1/2}
\bigg(\mathbb{E}\frac{\|P_{\a}\|^{2}_{2}}{\|P_{\a}\|^{2}_{\infty}}\bigg)^{1/2}.
\end{align*}
It follows from (\ref{2.7}) and (\ref{3.10}) that
\begin{equation}\label{4.2}
\mathbb{E}\frac{\|P_{\a}\|^{2}_{2}}{\|P_{\a}\|^{2}_{\infty}}\asymp
n^{d} \, \mathbb{E}\|P_{\a}\|^{-2}_{\infty}\ll (\ln n)^{-1}.
\end{equation}
In view of (\ref{2.8}) and \eqref{4.1}
\begin{equation*}
\mathbb{E}\frac{\|P_{\a}\|^{2}_{q}}{\|P_{\a}\|^{2}_{2}}\asymp
n^{-d}\,\mathbb{E}\|P_{\a}\|^{2}_{q}\asymp1,
\end{equation*}
which, together with (\ref{4.2}), yields that
$$\mathbb{E}\frac{\|P_{\a}\|_{q}}{\|P_{\a}\|_{\infty}}\ll
\frac{1}{\sqrt{\ln n}}.$$

Hence, we obtain the upper bounds of
$N^{\rm{ave}}_{p,q}(\mathcal{P}_{n})$ for $1\le p,q\le \infty$ as
follows.
\begin{equation}\label{4.3}
N^{\rm{ave}}_{p,q}(\mathcal{P}_{n})\ll\left\{
\begin{aligned}
 &1,&&\ \,1\leq p,q<\infty\ {\rm or}\ p=q=\infty,\\
 &(\ln n)^{1/2},&&\ \,1\leq p<q=\infty,\\
 &(\ln n)^{-1/2},&&\ \,1\leq q<p=\infty.
\end{aligned}
\right.
\end{equation}

Next, we prove the lower bounds. By the Cauchy inequality we have
\begin{align*}
1=\mathbb{E}(1)&=\mathbb{E}\bigg(\frac{\|P_{\a}\|_{q}}{\|P_{\a}\|_{p}}
\frac{\|P_{\a}\|_{p}}{\|P_{\a}\|_{q}}\bigg)^{1/2}\leq\bigg(\mathbb{E}\frac{\|P_{\a}\|_{q}}{\|P_{\a}\|_{p}}\bigg)^{1/2}
\bigg(\mathbb{E}\frac{\|P_{\a}\|_{p}}{\|P_{\a}\|_{q}}\bigg)^{1/2},
\end{align*}
which deduces that
\begin{equation}\label{4.4}
\mathbb{E}\frac{\|P_{\a}\|_{q}}{\|P_{\a}\|_{p}}\geq
\bigg(\mathbb{E}\frac{\|P_{\a}\|_{p}}{\|P_{\a}\|_{q}}\bigg)^{-1}.
\end{equation}
By \eqref{4.3} and \eqref{4.4} we get
\begin{equation}\label{4.5}
N^{\rm{ave}}_{p,q}(\mathcal{P}_{n})\gg \left\{
\begin{aligned}
 &1,&&\ \,1\leq p,q<\infty\ {\rm or}\ p=q=\infty,\\
 &(\ln n)^{1/2},&&\ \,1\leq p<q=\infty,\\
 &(\ln n)^{-1/2},&&\ \,1\leq q<p=\infty,
\end{aligned}
\right.
\end{equation}
which gives the lower bounds of
$N^{\rm{ave}}_{p,q}(\mathcal{P}_{n})$ for $1\le p,q\le \infty$.

Finally, combining \eqref{4.3} and \eqref{4.5}, we complete the the proof of Theorem \ref{thm1.1}.\end{proof}

\

\noindent\textbf{Acknowledgments}
  The authors  were
supported by the National Natural Science Foundation of China
(Project no. 12371098).

\end{document}